\documentclass[12pt]{amsart}
\usepackage{amsmath,amsthm,amsfonts,amssymb,latexsym}

\usepackage{hyperref}


\numberwithin{equation}{section}  

\textheight=8.5in
\textwidth=6.5in
\oddsidemargin=0pt
\evensidemargin=0pt
\hoffset=0in

\theoremstyle{remark}

\theoremstyle{plain}

\newtheorem{lem}{Lemma}[section]
\newtheorem{thm}{Theorem}

\numberwithin{equation}{section}


\newcommand{\ZZ}{{\mathbb Z}}

\newcommand{\NN}{{\mathbb N}}


\newcommand{\cD}{\ensuremath{\mathcal{D}}}
\newcommand{\cE}{\ensuremath{\mathcal{E}}}

\newcommand{\cV}{\ensuremath{\mathcal{V}}}




\DeclareMathOperator{\lcm}{lcm}


\makeatletter
\renewcommand{\pmod}[1]{\allowbreak\mkern7mu({\operator@font mod}\,\,#1)}
\makeatother


\newcommand{\be}{\begin{equation}}
\newcommand{\ee}{\end{equation}}



\renewcommand{\a}{\ensuremath{\alpha}}


\renewcommand{\le}{\leqslant}

\renewcommand{\ge}{\geqslant}






\newcommand{\DHL}{\text{DHL}}
%
%

\begin{document}

\title{Gaps between totients}
\author{Kevin Ford}
\address{Department of Mathematics, 1409 West Green Street, University
of Illinois at Urbana-Champaign, Urbana, IL 61801, USA}
\email{ford@math.uiuc.edu}
\author{Sergei Konyagin}
\address{Steklov Institute of Mathematics, 8 Gubkin Street, Moscow,
119991, Russia} 
\email{konyagin@mi-ras.ru}

\begin{abstract}
We study the set $\cD$ of positive integers $d$ 
for which the equation $\phi(a)-\phi(b)=d$ has infinitely
many solution pairs $(a,b)$.  We show that $\min \cD \le 154$,
exhibit a specific $A$ so that every multiple of $A$ is in $\cD$,
and show that any progression $a\mod d$ with $4|a$ and $4|d$,
contains infinitely many elements of $\cD$.
We also show that the 
Generalized Elliott-Halberstam Conjecture,
as defined in \cite{polymath}, implies that $\cD$
contains all positive, even integers.
\end{abstract}

\date{\today}
\thanks{2010 MSC 11A25, 11N64 (primary), 11D85 (secondary). }
\thanks{First author supported by National Science Foundation Grant
DMS-1802139.}
\thanks{Keywords and phrases: totients, prime gaps, admissible sets}
\maketitle


\section{Introduction}

Let $\cV=\{v_1,v_2,\ldots\}$ be the set of totients, that is,
$\cV$ is the image of Euler's totient function $\phi(n)$.
In this paper we study the set $\cD$ of positive integers
which are infinitely often a difference of two elements of
$\cV$.  
A classical conjecture asserts that every even positive integer
is infinitely often the difference of two primes, and this implies
immediately that $\cD$ is the set of all positive, even integers.
We are interested in what can be accomplished unconditionally,
by leveraging the recent breakthroughs on gaps between consecutive primes by Zhang \cite{Zhang}, Maynard \cite{Maynard}, Tao (unpublished) and
the PolyMath8b project \cite{polymath}.
We let $\cE$ be the set of positive even numbers that
are infinitely often the difference of two primes.
Clearly $\cE \subseteq \cD$.
In this note we prove some results about $\cD$ which are
not known for $\cE$.

The behavior of the smallest elements of $\cD$ 
arose in recent work of Fouvry and Waldschmidt \cite{FW}
concerning representation of integers by cyclotomic forms,
and the problem of studying the differences of totients was also posted
in a list of open problems by Shparlinski \cite[Problem 56]{shparlinski}.
Our paper is a companion of the recent work of the
first author \cite{Ford} concerning the equation
$\phi(n+k)=\phi(n)$ for fixed $k$.

It is known \cite{polymath} that $\min \cE \le 246$ and thus 
$\min \cD\le 246$.
We can do somewhat better.

\begin{thm}\label{uncond}
We have  $\min \cD\le 154$.
\end{thm}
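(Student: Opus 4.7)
The plan is to apply the unconditional $\DHL[50,2]$ result of Polymath8b \cite{polymath}—that every admissible $50$-tuple of integer linear forms has two of its members simultaneously prime for infinitely many $n$—not to the pure shift tuple (which only yields $\min\cE\le 246$), but to a mixed tuple of forms of two different slopes,
\[
\{L_i(n)=n+h_i\}_{i\in I}\,\cup\,\{M_j(n)=2n+b_j\}_{j\in J},\qquad |I|+|J|=50.
\]
The extra flexibility relative to the prime-gaps problem comes from the identity $\phi(3p)=2(p-1)$, valid for primes $p\ne 3$, which lets me compare $\phi$ of a prime with $\phi$ of three times a prime of roughly half its size and obtain a bounded totient difference.

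Assuming such a tuple has been constructed, I would check the three types of pair. For two $L$-forms, $\phi(L_i(n))-\phi(L_j(n))=h_i-h_j$. For two $M$-forms (with $b_j,b_k$ odd, so the values are odd primes), $\phi(M_j(n))-\phi(M_k(n))=b_j-b_k$. For a mixed pair with $L_i(n)=p>3$ and $M_j(n)=q$ both prime,
\[
\phi\bigl(3L_i(n)\bigr)-\phi\bigl(M_j(n)\bigr)=2(n+h_i-1)-(2n+b_j-1)=2h_i-b_j-1,
\]
again an integer independent of $n$. If the tuple is chosen so that every pair yields an integer of absolute value at most $154$ (and nonzero, which only requires $b_j\ne 2h_i-1$), then, since there are only $\binom{50}{2}$ pairs but infinitely many $n$ producing two simultaneous primes, some fixed pair is simultaneously prime for infinitely many $n$, and the corresponding constant is a positive element of $\cD$ of size at most $154$.

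The three bounds $|h_i-h_j|\le 154$, $|b_j-b_k|\le 154$ and $|2h_i-b_j-1|\le 154$, after normalizing $\min_i h_i=0$, restrict $H:=\max_i h_i$ to the interval $[77,154]$ and confine $b_j$ to $[2H-155,\,153]$; the symmetric choice $H=77$, $b_j\in[-1,153]$ makes both windows of width exactly $154$. I expect the main obstacle to lie in exhibiting an admissible 50-tuple in this configuration: the $h_i$ must share a common parity, the $b_j$ must all be odd (both needed for admissibility at $p=2$), and for every prime $3\le p\le 47$ the combined set of null residues $\{-h_i\bmod p\}\cup\{-b_j\cdot 2^{-1}\bmod p\}$ must omit at least one class modulo $p$. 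A pure shift 50-tuple inside a window of width $77$ would be impossible (the minimum diameter of an admissible 50-tuple is $246$), so the whole point of the mixed construction is that the $M$-forms essentially double the effective space while keeping the admissibility constraints tractable. A direct (computer-aided) combinatorial search, perhaps starting from a known dense $50$-tuple such as Engelsma's and substituting in $M$-forms, should supply the required tuple and complete the proof.
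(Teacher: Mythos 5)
Your overall strategy is the same as the paper's: apply $\DHL[50,2]$ to an admissible $50$-tuple of forms of \emph{mixed slopes}, and use a multiplier identity to align the totients across slopes (your $\phi(3p)=2(p-1)$ plays exactly the role of the paper's $\phi(4p)=2(p-1)$). But your write-up has a genuine gap at the decisive step: the admissible tuple is never exhibited. The entire theorem reduces to the existence of a specific configuration $\{n+h_i\}\cup\{2n+b_j\}$ with $50$ forms, admissibility at every prime $p\le 47$, and all pairwise constants nonzero of absolute value at most $154$; deferring this to an unspecified "computer-aided combinatorial search" leaves the proof conditional on an object you have not shown exists. Worse, your feasibility analysis suggests the two-slope version may in fact be too tight at the $154$ level: with the symmetric choice $H=77$ you have at most $39$ usable shifts $h_i$ (one parity class in $[0,77]$) plus $78$ usable odd $b_j$ in $[-1,153]$, i.e.\ $117$ candidate slots from which $50$ must survive the congruence conditions at all primes $p\le 47$. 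For comparison, the minimal-diameter pure $50$-tuple (diameter $246$) draws $50$ elements from $124$ same-parity slots, so your slot budget is strictly smaller than in the already-extremal pure case; since for every odd $p\le 37$ each of your two windows covers \emph{all} residues modulo $p$, the sieving losses are comparable, and it is entirely unclear that $50$ admissible forms fit. Trading $H$ upward only shrinks the $b$-window ($b_j\in[2H-155,153]$), so there is no easy escape within two slopes.

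The paper resolves exactly this packing problem by using \emph{three} slopes, $n+a$ ($a\in S_1$, $6$ primes in $[41,71]$), $2n+a$ ($a\in S_2$, $16$ primes in $[59,139]$), and $4n+a$ ($a\in S_4$, the $28$ primes in $[127,271]$), with the additional identity $\phi(8p)=4(p-1)$ to compare slope $1$ against slope $4$. Spreading the $50$ forms over three windows relaxes the admissibility constraints enormously (taking the shifts to be primes makes verification trivial for $p<41$ at $n=0$, with only $p\in\{41,43,47\}$ needing a separate check), and the bound $154$ then emerges as the worst cross-window constant, namely $\max|b-2a+1|$ over $a\in S_2$, $b\in S_4$ and $\max|b-4a+3|$ over $a\in S_1$, $b\in S_4$. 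So your pair-type bookkeeping and nonvanishing checks are sound, and your identification of the tuple construction as "the main obstacle" is accurate --- but that obstacle is the proof, and to complete your route you would either have to carry out the search (with real risk that $154$ is unattainable in the two-slope geometry) or enlarge the family of slopes as the paper does.
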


Although there is no specific even integer which is known
to be infinitely often the difference of two primes,
we give an infinite family of specific numbers that
are in $\cD$.

\begin{thm}\label{specific}
Let $a_0=\prod_{p\le 47} p$ and $b=\lcm[1,2,\ldots,49]$.
Then every multiple of $\phi(a_0 b_0)a_0$
lies in $\cD$.
\end{thm}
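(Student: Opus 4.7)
The plan is to apply the Maynard--Tao--PolyMath8b bounded-gaps theorem to a $k$-dependent admissible $50$-tuple, and then convert the resulting prime pair into the required totient difference $kM$ via the multiplicativity of $\phi$. For each positive integer $k$, I take the $50$-tuple
$$\mathcal{H}_k = \{0,\, ka_0,\, 2ka_0,\, \ldots,\, 49\,ka_0\}.$$
The first task is to verify admissibility. For every prime $p\le 47$ one has $p\mid a_0$, so $\mathcal{H}_k\equiv\{0\}\pmod{p}$ misses the residue $1$. For every prime $p\ge 53$, either $p\mid k$ (again all entries vanish mod $p$) or $ka_0$ is invertible modulo $p$, in which case $\mathcal{H}_k$ reduces to $50$ distinct residues that cannot cover $\ZZ/p\ZZ$ because $p\ge 53>50$. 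Hence $\mathcal{H}_k$ is admissible.

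Next I invoke the theorem of PolyMath8b \cite{polymath}, which asserts that every admissible $50$-tuple contains at least two primes in infinitely many translates. Pigeonholing over the $\binom{50}{2}$ index pairs produces fixed indices $0\le i<j\le 49$ (depending on $k$) such that $p_n:=n+ika_0$ and $q_n:=n+jka_0$ are simultaneously prime for infinitely many $n$. Writing $c=j-i\in\{1,\ldots,49\}$, one has $q_n-p_n=cka_0$. Crucially, since $c\le 49$ and $b_0=\lcm[1,\ldots,49]$, $c$ divides $b_0$, so $N:=a_0 b_0/c$ is an integer. A direct check using the factorization $a_0 b_0=\prod_{p\le 47}p^{e_p+1}$ (where $p^{e_p}\|\,b_0$) gives $\phi(N)=\phi(a_0 b_0)/c$, and for $n$ large enough both $p_n$ and $q_n$ exceed $47$ and are thus coprime to $N$. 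Consequently
$$\phi(q_n N)-\phi(p_n N)\;=\;(q_n-p_n)\phi(N)\;=\;cka_0\cdot\frac{\phi(a_0 b_0)}{c}\;=\;k\,a_0\,\phi(a_0 b_0)\;=\;kM,$$
and since this holds for infinitely many $n$, we conclude $kM\in\cD$.

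The only deep input is PolyMath8b; the rest is elementary. The main conceptual step one must not miss is letting the admissible tuple \emph{depend on} $k$: a single fixed admissible tuple would yield, by pigeonhole, a single divisor $c$ and only the multiples $cM, 2cM, 3cM, \ldots$ rather than every multiple of $M$. The choice $a_0=\prod_{p\le 47}p$ is dictated by admissibility at the small primes, while $b_0=\lcm[1,\ldots,49]$ is tailored precisely so that any pigeonhole value $c\in\{1,\ldots,49\}$ divides $b_0$, allowing the factor $c$ to cancel cleanly in the totient identity.
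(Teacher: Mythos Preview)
Your proof is correct and follows essentially the same approach as the paper: apply $\DHL[50,2]$ to the $k$-dependent admissible tuple $\{ika_0:0\le i\le 49\}$, pigeonhole to a fixed gap $c\in\{1,\ldots,49\}$, and then multiply the resulting prime pair by $N=a_0b_0/c$ so that the factor $c$ cancels via $\phi(N)=\phi(a_0b_0)/c$. Your admissibility verification and the justification of $\phi(N)=\phi(a_0b_0)/c$ are slightly more explicit than the paper's, but the argument is the same.
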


Granville, Kane, Koukoulopoulos and Lemke-Oliver \cite{GKKL} showed that
$\cE$ has lower asymptotic density at least
$\frac{1}{354}$ and thus so does $\cD$.
We do not know how to prove a better lower bound
for the density
and leave this as an open problem.

\medskip

Central to the works \cite{Maynard, Maynard-dense, polymath, Zhang}
is the concept of an admissible set of linear forms.
For positive integers $a_i$ and integers $b_i$, the set of affine-linear
forms $a_1x+b_1,\ldots,a_kx+b_k$ is admissible if, for every
prime $p$, there is an $x\in \ZZ$ such that
$p \nmid (a_1x+b_1)\cdots (a_kx+b_k)$.

\textbf{Definition}.  Hypothesis $\DHL[k,m]$ is the statement
that for any admissible $k$-tuple of linear forms
$a_in+b_i$, $1\le i\le k$, for infinitely many $n$,
at least $m$ of them are simultaneously prime.

In this paper we are concerned with the statements $\DHL[k,2]$.
The Polymath8b project \cite{polymath}, plus subsequent work of Maynard \cite{Maynard-dense},
established $\DHL[50,2]$ unconditionally.

The Elliott-Halberstam Conjecture implies $\DHL[5,2]$, see
\cite{Maynard}.
The Generalized Elliott-Halberstam Conjecture implies
$\DHL[3,2]$ (see \cite{polymath} for details).


\begin{thm}\label{DHL}
We have
\begin{enumerate}
\item[(i)] $\DHL[3,2]$ implies that $\cD=\{2,4,6,8,10,\ldots\}$,
the set of all positive even integers.
\item[(ii)] $\DHL[4,2]$ implies that $\cD$
contains every positive multiple of 4.
\item[(iii)] $\DHL[5,2]$ implies that $\min \cD \le 6$.
\item[(iv)] $\DHL[6,2]$ implies $8\in \cD$.
\end{enumerate}
\end{thm}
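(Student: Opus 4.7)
My plan is to establish each of the four parts by exhibiting, for the given target difference $d$ (or, in (iii), a target \emph{set} of differences), an admissible $k$-tuple of affine-linear forms $L_1,\dots,L_k$ with the property that for every pair $i<j$, simultaneous primality of $L_i(n)$ and $L_j(n)$ lets us write down integers $a=a(n,i,j)$ and $b=b(n,i,j)$ satisfying $\phi(a)-\phi(b)=d$ (or, in (iii), $\phi(a)-\phi(b)\in\{2,4,6\}$). Applying $\DHL[k,2]$ then yields infinitely many $n$ for which at least one such pair is simultaneously prime, and a pigeonhole over the $\binom{k}{2}$ pairs extracts a single pair that works infinitely often, placing $d$ (respectively some element of the target set) in $\cD$.

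For (i) I would take the $3$-tuple $\{n,\,n+d,\,2n+d-1\}$ for a given even $d\ge 2$. The three identities
\[
\phi(n+d)-\phi(n)=d,\qquad \phi(2n+d-1)-\phi(3n)=d,\qquad \phi\bigl(3(n+d)\bigr)-\phi(2n+d-1)=d
\]
each hold when the relevant forms take prime values exceeding $3$, and they cover all three pairs. Admissibility is direct: at $p=2$, choosing $n$ odd makes every form odd (since $d$ is even); at $p=3$ one checks by hand in each of the three residues of $d\bmod 3$; at $p\ge 5$ the three forms block at most three of the $p$ available residues. Since $\phi(a)-\phi(b)$ is even outside finitely many trivial exceptions, this, combined with $\DHL[3,2]$, gives $\cD=\{2,4,6,\ldots\}$.

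For (ii)--(iv) the method is analogous but exploits additional totient identities constructed via multipliers of small $\phi$-value. For instance, $\phi(5)=4$ gives $\phi(4n+4m-3)-\phi(5n)=4m$ whenever $n$ and $4n+4m-3$ are prime, allowing one to augment the tuple of (i) by a form of leading coefficient $4$ to treat (ii). For (iii), I would design an admissible $5$-tuple whose ten pairs each yield a $\phi$-difference in $\{2,4,6\}$; plausible candidates combine translates $n+c$ (with small $c$) and forms $2n+c'$, linked through multipliers drawn from $\{1,2,3,4,6\}$. For (iv) an analogous admissible $6$-tuple is engineered so that each of its fifteen pairs yields $\phi$-difference exactly $8$.

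The principal obstacle is balancing two competing constraints. Admissibility requires enough spread of the forms modulo each prime $p\le k$, which becomes more restrictive as $k$ grows, while demanding that every pair $(i,j)$ realize the target difference imposes algebraic conditions of the shape $\phi(M_{ij})\cdot(\text{leading coef of }L_j)=\phi(M_{ij}')\cdot(\text{leading coef of }L_i)$ together with a matching intercept identity, sharply restricting the feasible leading coefficients and intercepts. The tightest case-check is at the small primes $p=2,3,5$, where the margin between having enough forms for $\DHL[k,2]$ and having too few residues to avoid is narrowest, and finding multipliers $M_{ij}$ whose $\phi$-values cooperate across all $\binom{k}{2}$ pairs is where I expect the bulk of the construction to live.
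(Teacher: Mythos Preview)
Your treatment of (i) is complete and correct, and it is essentially the paper's argument: the paper uses the triple $\{n+1,\,n+2h+1,\,2n+2h+1\}$ with multiplier $4$ in place of your $3$, but the mechanism is identical.

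For (ii), however, your concrete suggestion of ``augmenting the tuple of (i) by a form of leading coefficient $4$'' does not work. With $d=4m$ the natural $4$-tuple is $\{n,\,n+4m,\,2n+4m-1,\,4n+4m-3\}$; five of the six pairs do yield difference $4m$, but the pair $(n+4m,\,4n+4m-3)$ does not. Equalizing leading terms forces a multiplier $M$ with $\phi(M)=4$ on the first form, and then
\[
\phi\bigl(M(n+4m)\bigr)-\phi(4n+4m-3)=4(n+4m-1)-(4n+4m-4)=12m,
\]
not $4m$. More generally, if the fourth form is $4n+c$, the three pairs involving it impose $c\in\{4m-3,-4m-3\}$, $c\in\{12m-3,20m-3\}$, $c\in\{4m-3,12m-3\}$ respectively (using the minimal multipliers), and these have empty intersection for $m\ge 1$. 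So the ``two translates plus higher-slope forms'' template inherited from (i) cannot be pushed to (ii).

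The paper avoids this obstruction by a genuinely different construction: it takes forms $m_in-a$ with \emph{all} leading coefficients distinct, namely $m_i\in\{6,8,9,12\}$, chosen so that for every pair $i<j$ both $\dfrac{2m_i}{m_j-m_i}$ and $\dfrac{2m_j}{m_j-m_i}$ lie in $\cV$. This single arithmetic condition on the $m_i$ makes \emph{every} pair produce the same difference automatically (this is Theorem~\ref{DHLk}), and varying the constant term $a$ then reaches all multiples of $4$. For (iii) the paper does exactly what you outline and simply exhibits the admissible $5$-tuple $\{n,\,n+2,\,2n+1,\,4n-1,\,4n+3\}$, checking that every pair gives a $\phi$-difference at most $6$. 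For (iv) the paper again uses the Theorem~\ref{DHLk} mechanism, now with $\ell=4$ and a six-element set of $m_i$ found by computer search; a hand construction along the lines you sketch would be difficult to discover.
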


By contrast, for any $k\ge 2$, $\DHL[k,2]$ implies that 
$\liminf p_{n+1} - p_n \le a_k$, where 
$a_k$ is the minimum of $h_{k}-h_1$ over all
admissible $k-$tuples $n+h_1,\ldots,n+h_k$.
We have $a_3=6$, $a_4=8$, $a_5=12$ 
and $a_6=16$.

We show parts of Theorem \ref{DHL} (ii) and (iii) using
a more general result.

\begin{thm}\label{DHLk}
Assume $\DHL[k,2]$, with $k\ge 3$.   Also assume that there are
integers $1<m_1<\ldots<m_k$ and $\ell_{i,j}$ for
$1\le i<j\le k$ and such that
\[
\frac{\ell_{i,j} m_i}{m_j-m_i}\in \cV, \;\; 
\frac{\ell_{i,j} m_j}{m_j-m_i} \in \cV \qquad (1\le i< j\le k).
\]
Then $L\le 2\max \ell_{i,j}$.
Moreover, if $\ell_{i,j}=\ell$ for all $i,j$
then $2\ell \in \cD$.
\end{thm}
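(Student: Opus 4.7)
The plan is to apply $\DHL[k,2]$ to an explicit admissible $k$-tuple tailored to the $m_i$, so that whenever two of the forms are simultaneously prime we manufacture, for that specific pair of indices $(i,j)$, a difference of two totients equal to $2\ell_{i,j}$; pigeonholing over pairs then puts one such value into $\cD$.

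Take the linear forms $\cL_i(N) := m_i N - 1$ for $1 \le i \le k$. Admissibility is free: for every prime $p$, the residue $N \equiv 0 \pmod{p}$ gives $\cL_i(N) \equiv -1 \pmod{p}$ for every $i$, so $p \nmid \prod_i \cL_i(N)$. For each pair $1 \le i < j \le k$, the hypothesis supplies positive integers $A_{i,j}, B_{i,j}$ with
\[
\phi(A_{i,j}) = \frac{\ell_{i,j}\, m_j}{m_j - m_i}, \qquad \phi(B_{i,j}) = \frac{\ell_{i,j}\, m_i}{m_j - m_i}.
\]
Note the crucial identity $\phi(A_{i,j})\, m_i = \phi(B_{i,j})\, m_j$, which is exactly what will force the $N$-dependent contribution to cancel in the computation below.

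Now suppose $N$ is large enough that $p_i := \cL_i(N)$ and $p_j := \cL_j(N)$ are both prime and exceed $A_{i,j} B_{i,j}$; then $\gcd(p_i, A_{i,j}) = \gcd(p_j, B_{i,j}) = 1$ and $\phi(Xp) = \phi(X)(p-1)$ applies to both. A direct computation yields
\[
\phi(B_{i,j}\, p_j) - \phi(A_{i,j}\, p_i) = \frac{\ell_{i,j}}{m_j - m_i}\bigl(m_i(m_j N - 2) - m_j(m_i N - 2)\bigr) = 2\ell_{i,j}.
\]
By $\DHL[k,2]$ there are infinitely many $N$ for which at least two of the $\cL_i(N)$ are prime. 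Pigeonholing over the $\binom{k}{2}$ index pairs, there is a single pair $(i_0, j_0)$ such that $\cL_{i_0}(N)$ and $\cL_{j_0}(N)$ are simultaneously prime for infinitely many $N$; for all sufficiently large such $N$ the displayed identity shows that $2\ell_{i_0, j_0}$ is a difference of two totients, so $2\ell_{i_0, j_0} \in \cD$. This gives $\min \cD \le 2\ell_{i_0, j_0} \le 2\max_{i<j} \ell_{i,j}$; and when all $\ell_{i,j}$ equal a common $\ell$ the output is $2\ell$ regardless of which pair wins, so $2\ell \in \cD$.

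There is no deep obstacle once the correct forms are chosen. The key design choice is the constant $-1$ in $\cL_i$: it simultaneously grants admissibility (via $N \equiv 0 \pmod p$ for every $p$) and produces the term $\cL_i(N) - 1 = m_i N - 2$ whose $-2$ is exactly what drives the collapse to the factor $2$ in $2\ell_{i,j}$, rather than some larger multiple. The remaining ingredients---existence of $A_{i,j}, B_{i,j}$ with the prescribed totient values and their coprimality to $p_i, p_j$---are immediate from the hypothesis and from taking $N$ large, so no sieve or analytic input beyond $\DHL[k,2]$ itself is used.
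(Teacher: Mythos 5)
Your proposal is correct and follows essentially the same route as the paper: the same admissible forms $m_in-1$, the same use of totient preimages of $\ell_{i,j}m_i/(m_j-m_i)$ and $\ell_{i,j}m_j/(m_j-m_i)$ paired with the larger and smaller prime respectively, and the same telescoping computation yielding $2\ell_{i,j}$. Your only additions are making explicit what the paper leaves implicit---the admissibility check via $N\equiv 0\pmod p$, the pigeonhole over the $\binom{k}{2}$ pairs, and the coprimality condition justifying $\phi(Xp)=\phi(X)(p-1)$---all of which are sound.
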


In Section \ref{sec:heuristic}, we give a 
heuristic argument that 
there exist numbers $m_1,\ldots,m_{50}$
satisfying the hypothesis of Theorem \ref{DHLk} with 
$\ell_{i,j}=2$ for all $i<j$.
In this case we achieve an unconditional proof
that $4\in \cD$.  Actually finding such $m_i$ seems
computationally difficult, however.

Does every arithmetic progression $a\bmod d$ containing even numbers
have infinitely many elements of $\cD$?  We answer in 
the affirmative if $4|d$ and $4|a$.  The case
$a\equiv 2\pmod{4}$ is more difficult; see our
Remarks following the proof in Section \ref{sec:ap}.

\begin{thm}\label{ap}
Let $a,d$ be positive integers with $4|a$, $4|d$.  Then 
the progressions $a\mod d$ contains infinitely many
elements of $\cD$.
\end{thm}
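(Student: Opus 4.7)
The plan is to combine the unconditional bounded-gaps result $\DHL[50,2]$ (\cite{polymath}) with Dirichlet's theorem, and, where needed, a custom admissible-tuple construction adapted to the given $a,d$. Specifically, $\DHL[50,2]$ yields some $h^\ast\in\cE$ with $h^\ast\le 246$, hence infinitely many prime pairs $(p,p+h^\ast)$; for any positive integer $m$ coprime to $p(p+h^\ast)$ we have $\phi(m(p+h^\ast))-\phi(mp)=\phi(m)h^\ast$, so $\phi(m)h^\ast\in\cD$. Taking $m$ itself a prime $P$, and letting $P$ vary over primes in a fixed arithmetic progression, the values $(P-1)h^\ast$ are distinct and all lie in $\cD$.

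Suppose first that $\gcd(h^\ast,d)\mid a$. Write $g=\gcd(h^\ast,d)$, $h'=h^\ast/g$, $a'=a/g$, $d'=d/g$; then the congruence $(P-1)h^\ast\equiv a\pmod d$ reduces to $P\equiv 1+a'(h')^{-1}\pmod{d'}$. Since $\gcd(h',d')=1$ the inverse exists, and the hypotheses $4\mid a$, $4\mid d$ together with $h^\ast$ being even guarantee that the target residue is coprime to $d'$. Dirichlet's theorem then supplies infinitely many such primes $P$, and each yields a distinct $n=(P-1)h^\ast\in\cD\cap(a+d\ZZ)$, settling this case.

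The main obstacle is the case $\gcd(h^\ast,d)\nmid a$, which can genuinely arise since $h^\ast$ is only known up to the bound $246$. To overcome it, I would design a bespoke admissible $50$-tuple $\mathcal H=\{h_1,\ldots,h_{50}\}$ adapted to $(a,d)$ so that every pair difference satisfies $\gcd(h_j-h_i,d)\mid a$, thereby replacing the unknown $h^\ast$ by a target-specific element $g\in\cE$ in the construction above. This requires the $h_i$ to be distinct modulo each odd prime $p\mid d$ with $p\nmid a$: immediate by a CRT construction for primes $p\ge 50$, but forced to fail by pigeonhole at small primes $p<50$. For such small primes the remedy is to invoke Theorem \ref{DHLk} with carefully chosen $m_i,\ell_{i,j}$ so that the compound totient identity $\phi(A)-\phi(B)$ produced by the winning pair lies in $a+d\ZZ$ irrespective of which pair wins, leveraging the $4\mid a,d$ assumption for the necessary mod-$2$ flexibility. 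This combinatorial/modular construction at small primes is the heart of the argument and the main technical difficulty.
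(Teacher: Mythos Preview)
Your proposal has two genuine gaps. First, the coprimality claim in your ``easy'' case is false. Take $h^\ast=6$, $d=36$, $a=12$: then $g=\gcd(6,36)=6$ divides $a$, so you are in the first case, with $h'=1$, $d'=6$, $a'=2$, and the target residue is $P\equiv 1+2\cdot 1=3\pmod 6$, which is not coprime to $d'$. (A purely $2$-adic failure also occurs, e.g.\ $h^\ast=4$, $d=16$, $a=4$, giving target $P\equiv 2\pmod 4$.) The hypotheses $4\mid a$, $4\mid d$ and $2\mid h^\ast$ do not force $1+a'(h')^{-1}$ to be a unit modulo $d'$. Second, and more seriously, your ``hard'' case is not a proof but a wish: you correctly observe that forcing all $h_j-h_i$ to avoid a small prime $p\mid d$, $p\nmid a$, is impossible for $p<50$, and then gesture at Theorem~\ref{DHLk} without specifying the $m_i$ or $\ell_{i,j}$, or explaining how the resulting totient differences would land in $a\bmod d$. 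That construction is precisely the crux, and it is missing.

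The paper sidesteps both issues with a different idea. Rather than an additive tuple $\{n+h_1,\ldots,n+h_{50}\}$, it uses a \emph{geometric} tuple $f_j(x)=D^j x-v$, $1\le j\le 50$, where $D$ is chosen so that $d\mid D$ and every power $D,D^2,\ldots,D^{49}$ is itself a totient. If $p_1=D^{j_1}x-v$ and $p_2=D^{j_2}x-v$ are the two primes produced by $\DHL[50,2]$, one picks $l$ with $\phi(l)=D^{j_2-j_1}$ and computes $\phi(p_2)-\phi(lp_1)=(v+1)(D^{j_2-j_1}-1)$, whose residue modulo $D$ is $-(v+1)$ \emph{regardless of which pair $(j_1,j_2)$ wins}. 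Multiplying by an auxiliary prime $q$ in a suitable class modulo $D$ (and varying $q$) then yields infinitely many elements of $\cD$ in the target class; a short lemma shows that $a$ can be written as $(v_1\pm 1)(v_2-1)\bmod D$ with $v_1,v_2$ units, which supplies the needed $v$ and $q$. The geometric structure makes the ``which pair wins'' ambiguity disappear modulo $D$, which is exactly the obstruction your approach cannot overcome.
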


Observe that, even assuming  $\DHL[3,2]$, there is no specific progression 
$a\mod d$, not containing $0$, which is known to contain a number that is  
infinitely often the difference of two primes.

When $4|d$ and $a\equiv 2\pmod{4}$ we can sometimes show that
$\cD$ contains infinitely many elements that are $\equiv a\pmod{d}$;
see the Remarks following the proof of Theorem \ref{ap}
in Section \ref{sec:ap}.

\section{Proof of Theorems \ref{uncond}--\ref{DHLk}}

\begin{proof}[Proof of Theorem \ref{uncond}]
Define
\begin{align*}
S_1 &= \{41,43,47,53,67,71\}, \\
S_2 &= \{59,61,67,71,73,83,89,101,103,107,109,113,127,131,137,139\},
\\
S_4 &= \{p\text{ prime}: 127 \le p\le 271\}.
\end{align*}
and consider the collection of 50 linear forms
\[
n+a \;\; (a\in S_1), \quad 2n+a \;\; (a\in S_2), \quad
4n+a \;\; (n\in S_4).
\]
This collection is admissible; indeed if $p<41$ and $n=0$
then all of them are coprime to $p$.  For $p>50$
it is clear that there is an $n$ for which all of them
are coprime to $p$.  When $p\in \{41,43,47\}$
we take $n=1,3,8$ ,respectively, and then all of the forms
are coprime to $p$.
By $\DHL[50,2]$, there are two of these forms that are
simultaneously prime for
infinitely many $n$.
If both forms are of the type $n+a$ for $a\in S_1$,
then this shows that $\min \cD \le 71-41= 30$.
Likewise, if both forms are of the type $2n+a$
for $a\in S_2$ then $\min \cD \le 139-59=80$
and if both forms are of the type $4n+a$ where $a\in S_4$,
then $\min \cD \le 271-127=144$.
 Now suppose for infinitely many
$n$, $n+a$ and $2n+b$ are both prime, where $a\in S_1$, $b\in S_2$.
Then
\[
\phi(4(n+a))=2n+2a-2, \quad \phi(2n+b)=2n+b-1,
\]
which shows that $|b-2a+1|\in \cD$.
We have $b-2a+1\ne 0$ for all choices, and the maximum
of $|b-2a+1|$ is 82, and hence $\min \cD \le 82$.
Similarly, if for  infinitely many
$n$, $2n+a$ and $4n+b$ are both prime, where $a\in S_2$, $b\in S_4$,
then $|b-2a+1|\in \cD$.  Hence $\min \cD \le 154$.
Finally, if  for  infinitely many
$n$, $n+a$ and $4n+b$ are both prime, where $a\in S_1$, $b\in S_4$,
then $|b-4a+3|\in \cD$ since
\[
\phi(8(n+a))=4(n+a-1), \qquad \phi(4n+b)=4n+b-1.
\] 
In all cases $0<|b-4a+3| \le 154$.
\end{proof}

\begin{proof}[Proof of Theorem \ref{specific}]
Let 
$$a_0=\prod_{p\le47}p,
\quad b_0=\lcm[1,2,\ldots,49].$$
Let $k\in \NN$ and consider the admissible set of 
linear forms $n+ka_0, n+2ka_0, \ldots, n+50ka_0$.
Since $\DHL[50,2]$ holds,
for any $k\in\NN$ there exists $j_k\in\{1,\dots,49\}$
such that the equation
$$\phi(u)-\phi(v)=u-v=kj_k a_0$$
has infinitely many solutions in primes $u,v$.
Since $a_0\mid a_0b_0/j_k$, $\phi(a_0b_0 l/j_k) = \phi(a_0b_0 l)/j_k$ for any $l\in\NN$.
Therefore, we have 
$$\phi(a_0b_0 u/j_k) - \phi(a_0b_0 v/j_k) 
=\phi(a_0b_0)(\phi(u)-\phi(v))/j_k = \phi(a_0b_0)a_0 k,$$
as required.
\end{proof}

\begin{proof}[Proof of Theorem \ref{DHLk}]
The set of forms $m_1n-1,\ldots,m_kn-1$ is clearly admissible.
By $\DHL[k,2]$, for some pair $i<j$ and for infinitely many $n$ ,
$m_in-1$ and $m_jn-1$ are prime.
Let $\ell=\ell_{i,j}$ and suppose
that  $x,y$ satisfy 
\[
\phi(x) = \frac{\ell m_i}{m_j-m_i}, \qquad
\phi(y) = \frac{\ell m_j}{m_j-m_i}.
\]
Then for sufficiently large $n$
\[
\phi( x (m_jn-1) ) - \phi (y (m_in-1)) = 
\frac{-2\ell m_i+2\ell m_j}{m_j-m_i} = 2\ell. \qedhere
\]
\end{proof}

\begin{proof}[Proof of Theorem \ref{DHL}]
(i)  Let $h\in \NN$ and
consider the triple $\{n+1,n+2h+1,2n+2h+1\}$. 
This is admissible, since when $n=0$, all
 of the forms are odd, and similarly
 none are divisible by 3 for some $n\in \{0,1\}$.
By $\DHL[3,2]$, either (i) $n+1$ and $n+2h+1$ 
are infinitely often both prime, (ii) $n+1$
and $2n+2h+1$ are  infinitely often both prime
or (iii) $n+2h+1$
and $2n+2h+1$ are  infinitely often both prime.
In case (i) we have $\phi(n+2h+1)-\phi(n+1)=2h$,
in case (ii) we have $\phi(2n+2h+1)-\phi(4(n+1))=2h$,
and in case (iii) we have $\phi(4(n+2h+1))-\phi(2n+2h+1)=2h$. 

(ii) The deduction $4\in \cD$ follows from Theorem \ref{DHLk} 
using $\ell_{i,j}=2$ for all $i,j$ and
\[\{m_1,\ldots,m_4\} = \{6,8,9,12\}.
\]
Now suppose that $d$ is congruent to 0 or 4 modulo 12,
and define $a$ by $d=2(a+1)$.  In particular, $(a,6)=1$.
Thus, the set of forms $m_in-a$, $1\le i\le 4$, are admissible.
By $\DHL[4,2]$, for some pair $i<j$ and for infinitely many $n$ ,
$m_in-a$ and $m_jn-a$ are prime. Suppose
that  $x,y$ satisfy 
\[
\phi(x) = \frac{2 m_i}{m_j-m_i}, \qquad
\phi(y) = \frac{2 m_j}{m_j-m_i}.
\]
Then for sufficiently large $n$
\[
\phi( x (m_jn-a) ) - \phi (y (m_in-a)) = 
2(a+1) = d. \qedhere
\]
Hence, $d\in \cD$.

Finally, if $d\equiv 8\pmod{12}$, write $d=2(b-1)$,
 so that $(b,6)=1$.  Similarly, the set of forms
 $m_i n + b$, $1\le i\le 4$, are admissible
 and we conclude that $d\in \cD$.

(iii) Consider the admissible set of forms
$\{f_1(n),\ldots,f_5(n)\}=\{n, n+2, 2n+1, 4n-1, 4n+3\}$.
Indeed, if $n\equiv 11\pmod{30}$ then all
of the forms are coprime to $30$.
By $\DHL[5,2]$, for some $i<j$ and infinitely
many $n$, $f_i(n)$ and $f_j(n)$ are both prime.
Say $f_i(n)=an+b$ and $f_j(n)=cn+d$
with $c/a \in \{1,2,4\}$.  
Then \[
\phi((2c/a)(an+b))=(c/a)(an+b-1)=cn+(c/a)(b-1)
\]
and
\[
\phi(cn+d) = cn+d-1.
\]
Thus, $|(c/a)(b-1)-(d-1)| \in \cD$.
In all cases, $|(c/a)(b-1)-(d-1)|\le 6$.

(iv) Use Theorem \ref{DHLk} with the set
\[
\{h-72, h-66, h-64, h-63, h-60, h \}, \quad
h=120193920,
\]
and $\ell_{i,j}=4$ for all $i,j$.
We used PARI/GP to verify that
$4n_i/(n_j-n_i)\in \cV$ and $4n_j/(n_j-n_i)\in \cV$ for all $i<j$.
\end{proof}

%
%
\section{A heuristic argument}\label{sec:heuristic}

In this section, we give an argument that
there should exist $m_1,\ldots,m_{50}$ satisfying
the hypothesis of Theorem \ref{DHLk} with $\ell_{i,j}=2$
for all $i,j$.  We first give a general construction
of numbers with $\frac{n_i}{n_j-n_i}$ all integers.

\begin{lem}\label{sets-b}
For any positive integer $b$  and and $k\ge 2$
there is a set $\{n_1,\ldots,n_k\}$ of positive integers
with $n_1<n_2<\cdots <n_k$ and with
\be\label{b-divis}
b \Big| \frac{n_j}{n_j-n_i} \quad (1\le i<j\le k).
\ee
\end{lem}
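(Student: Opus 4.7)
The plan is to look for $n_i$ in the form $n_i = s + \alpha_i$, where $0 = \alpha_1 < \alpha_2 < \cdots < \alpha_k$ is a sequence of integers chosen in advance and $s$ is a positive integer. Since $n_j - n_i = \alpha_j - \alpha_i$, the condition $b \mid n_j/(n_j-n_i)$ is equivalent to $b(\alpha_j - \alpha_i) \mid s + \alpha_j$, and as $i$ ranges over $\{1, \ldots, j-1\}$ these $j-1$ conditions collapse to the single congruence
\[
s \equiv -\alpha_j \pmod{b M_j}, \qquad M_j := \lcm(\alpha_j - \alpha_i : 1 \le i < j),
\]
with the convention $M_1 = 1$.

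The crucial step is to pick the $\alpha_j$'s so that this system of $k - 1$ congruences on $s$ is nested. Define the sequence recursively by
\[
\alpha_1 = 0, \qquad \alpha_{j+1} = \alpha_j + b M_j \quad (1 \le j \le k-1).
\]
Observe that $\alpha_{j+1} - \alpha_j = b M_j$ appears as one of the terms in the $\lcm$ defining $M_{j+1}$, so $b M_j \mid M_{j+1}$; in particular $M_j \mid M_{j+1}$. By induction $M_j \mid M_l$ for all $l \ge j$, which lets us telescope
\[
\alpha_{j'} - \alpha_j = \sum_{l = j}^{j'-1} (\alpha_{l+1} - \alpha_l) = \sum_{l = j}^{j'-1} b M_l \equiv 0 \pmod{b M_j}.
\]
Hence any $s$ satisfying the strongest congruence $s \equiv -\alpha_k \pmod{b M_k}$ automatically satisfies all earlier ones, so the whole CRT system reduces to a single condition.

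I would then take $s$ to be any positive integer in this residue class---for instance $s = bM_k - \alpha_k$, which is positive because $M_k \ge \alpha_k - \alpha_1 = \alpha_k$ (adding a further $bM_k$ in the one borderline case $b = 1$, $M_k = \alpha_k$ to force strict positivity). The integers $n_i = s + \alpha_i$ are then positive and strictly increasing, and for every $i < j$ the congruence $s + \alpha_j \equiv 0 \pmod{bM_j}$ together with $(\alpha_j - \alpha_i) \mid M_j$ yield $b(\alpha_j - \alpha_i) \mid n_j$, which is what was wanted. The heart of the argument is the structural claim $M_j \mid M_{j+1}$, which is what makes the congruences on $s$ chain together; once that is in place, the verification is routine.
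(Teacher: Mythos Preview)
Your proof is correct. The key recursion $\alpha_{j+1}=\alpha_j+bM_j$ forces $bM_j\mid M_{j+1}$, and the telescoping argument that the congruences $s\equiv -\alpha_j\pmod{bM_j}$ nest is clean and correct; the positivity of $s$ is handled carefully, including the borderline case $b=1$.

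The paper's argument is an induction on $k$: starting from $\{2b-1,2b\}$, at each step one lets $M=\lcm(n_j-n_i)$ and $K=\lcm(M,\,bM-n_1,\ldots,bM-n_k)$, and replaces $\{n_1,\ldots,n_k\}$ by the shifted set $\{Kb-bM+n_1,\ldots,Kb-bM+n_k,\,Kb\}$. So the paper translates and appends one point at a time, verifying \eqref{b-divis} anew after each step. Your approach instead fixes the entire ``shape'' $(\alpha_1,\ldots,\alpha_k)$ up front via the recursion and then performs a \emph{single} translation by $s$, reducing the whole problem to one congruence. The two constructions are cousins---both exploit that one can preserve the old divisibilities under translation by a multiple of $b\cdot\lcm(\text{differences})$---but your packaging is more direct and makes the compatibility of the congruences transparent through the chain $M_1\mid M_2\mid\cdots\mid M_k$, whereas the paper's version hides this inside the inductive hypothesis.
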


\begin{proof}
Induction on $k$.  When $k=2$ take $\{2b-1,2b\}$.
Now assume \eqref{b-divis} holds for some $k\ge 2$.  
Let $M$ be the least common
multiple of the $\binom{k}{2}$ numbers
\[
n_j-n_i \quad (1\le i<j\le k),
\]
and let $K$ be the least common multiple of the numbers
\[
M, bM-n_1,\ldots,bM-n_k.
\]
We claim that the set 
\[
\{n_1',\ldots,n_{k+1}'\} = \{Kb-bM+n_1,
 \ldots, Kb - bM +n_k, Kb \}
\]
satisfies \eqref{b-divis}.
Indeed, when $1\le i<j\le k$ we have
\[
\frac{n_{j}'}{n_{j}'-n_{i}'} = \frac{Kb - bM + n_j}{n_j-n_i},
\]
which by hypothesis is divisible by $b$.
Finally, for any $i\le k$,
\[
\frac{n_{k+1}'}{n_{k+1}'-n_i'} = \frac{Kb}{bM-n_i},
\]
which is also divisible by $b$.
\end{proof}

Now let $b=\prod_{p\le 2450} p$, and apply Lemma \ref{b-divis}
with $k=50$.  There is a set $\{n_1,\ldots,n_{50} \}$
such that for all $i<j$,
\be\label{b-divis-50}
b\, \Big| \, \frac{n_j}{n_j-n_i}.
\ee
Let $M$ be the least common multiple of
 the $\binom{50}{2}$ numbers
\[
n_j-n_i \quad (1\le i<j\le 50).
\]
Then for any $h\in \NN$, the set 
$\{n_1+hbM,\ldots,n_{50}+hbM\}$ has the same property.
The collection of 2450 linear forms (in $h$)
\[
\frac{2(n_i+hbM)}{n_{j}-n_i} + 1 = 
\frac{2(n_j+hbM)}{n_j-n_i} - 1 \qquad(1\le i < j\le 50)
\]
and
\[
\frac{2(n_j+hbM)}{n_j-n_i}+1\qquad
(1\le i < j\le 50)
\]
is admissible by \eqref{b-divis-50}, and
the Prime $k$-tuples conjecture implies
that all of these are prime for some $h$.
We need only the existence of one $h$,
and then the hypotheses of Theorem \ref{DHLk}
hold with $\ell_{i,j}=2$ for all $i,j$,
and consequently $4\in \cD$.  Discovering such an $h$,
however, appears to be computationally infeasible.

%
%
\section{Totient gaps in progressions: proof of Theorem \ref{ap}}
\label{sec:ap}

\begin{lem}\label{ap-lem}
 Suppose that $D\in \NN$ and $4|a$.
Then there exist $v_1$ and $v_2$ such that $(D,v_1) = (D,v_2) = 1$ and
$(v_1-1)(v_2-1)\equiv a\pmod D$ or $(v_1+1)(v_2-1)\equiv a\pmod D$.
\end{lem}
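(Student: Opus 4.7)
The plan is to use the Chinese Remainder Theorem to localize at each prime $p \mid D$ and combine. I would first observe that the substitution $v_1 \mapsto -v_1$ (which preserves coprimality with $D$) converts the equation $(v_1+1)(v_2-1) \equiv a \pmod D$ into $(v_1-1)(v_2-1) \equiv -a \pmod D$, so the lemma reduces to showing that for at least one choice $c \in \{a, -a\}$ the congruence $(v_1-1)(v_2-1) \equiv c \pmod D$ admits a solution with $v_1, v_2$ coprime to $D$.

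By CRT, it would suffice to solve $(u_1-1)(u_2-1) \equiv c \pmod{p^e}$ with $u_1, u_2$ units modulo $p^e$ for each prime power $p^e$ exactly dividing $D$. I would dispatch this by cases. For $p = 2$: since $4 \mid c$, taking $u_1 = u_2 = 1$ handles $e \le 2$, and writing $u_i - 1 = 2 w_i$ reduces the problem for larger $e$ to $w_1 w_2 \equiv c/4 \pmod{2^{e-2}}$, which is trivial. For odd $p$ dividing $c$: the explicit choice $u_1 = 2$, $u_2 = c+1$ works, both being units since $p$ is odd and $c \equiv 0 \pmod p$ forces $c+1 \equiv 1 \pmod p$.

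For odd $p$ with $p \nmid c$, I would parameterize by the unit $s = u_1 - 1 \in (\ZZ/p^e\ZZ)^\ast$; the constraints are $s \not\equiv -1 \pmod p$ (so that $u_1$ is a unit) and $s \not\equiv -c \pmod p$ (so that $u_2 = 1 + c s^{-1}$ is a unit). A counting argument yields at least $p^{e-1}(p-3)$ valid residue classes for $s$, which is positive for $p \geq 5$. The only real obstruction is $p = 3$, where a direct inspection of $(\ZZ/3\ZZ)^\ast = \{1,2\}$ shows the local equation is solvable exactly when $c \not\equiv 2 \pmod 3$.

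To finish, if $3 \nmid D$ or $a \not\equiv 2 \pmod 3$, I would take $c = a$ (so form 1 holds); otherwise $a \equiv 2 \pmod 3$ forces $-a \equiv 1 \pmod 3$, so $c = -a$ resolves the local obstruction at $3$ while leaving all other local problems solvable, and form 2 applies. The main obstacle is the small size of $(\ZZ/3\ZZ)^\ast$, which is precisely what forces the two alternatives in the statement; the hypothesis $4 \mid a$ is used only to handle the prime $2$.
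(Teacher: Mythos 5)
Your proposal is correct and follows essentially the same route as the paper's proof: a CRT reduction to prime powers, the negation trick $v_1\mapsto -v_1$ identifying the second form for $a$ with the first form for $-a$, the same explicit solution at $p=2$ (your $u_1=3$, $u_2=1+c/2$ is exactly the paper's choice), a counting argument excluding at most three residues for $p\ge 5$, and the recognition that $p=3$ is the sole obstruction forcing the two alternatives. The only difference is organizational: you commit globally to one $c\in\{a,-a\}$ up front based on the condition at $3$, whereas the paper verifies both forms locally at each $p\neq 3$ and then glues, which amounts to the same thing.
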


\begin{proof}
We use the Chinese Remainder Theorem. We will prove that if $D=p^\alpha$
is a prime power, then for $p\neq3$ we can find a pair $v_1$ and $v_2$ such 
that $(D,v_1) = (D,v_2) = 1$ and $(v_1-1)(v_2-1)\equiv a\pmod D$
and another pair $v'_1$ and $v'_2$ such 
that $(D,v'_1) = (D,v'_2) = 1$ and $(v'_1+1)(v'_2-1)\equiv a\pmod D$.
If $p=3$ then we will find appropriate $v_1$ and $v_2$ such that
one of desired congruences hold. This will suffice for the proof of the 
lemma. 

If$p\neq 3$ and for any $a$, $4|a$, a pair
$v_1$ and $v_2$ exists such 
that $(D,v_1) = (D,v_2) = 1$ and $(v_1-1)(v_2-1)\equiv a\pmod D$, 
then there also exists a pair $v'_1$ and $v'_2$ as well. Indeed,
take any possible $D$ and $a$. Then, by our supposition, there are
$v_1$ and $v_2$ such 
that $(D,v_1) = (D,v_2) = 1$ and $(v_1-1)(v_2-1)\equiv -a\pmod D$.
Then for $v_1'=-v_1$ and $v_2'=v_2$ the desired congruence 
$(v'_1+1)(v'_2-1)\equiv a\pmod D$ holds.
  
Consider $p=2$.  Take $v_1\equiv 3\pmod{p^\a}$ and $v_2\equiv\frac{a}{2}+1 \mod{p^\a}$.
Then $(v_1-1)(v_2-1)\equiv a\pmod{p^\a}$ and both $v_1,v_2$ are odd.

Consider $p=3$. If $a\equiv0\pmod 3$ or $a\equiv1\pmod 3$, let
$v_1\equiv 2\pmod{3^\alpha}$ and $v_2\equiv a+1\pmod{3^\alpha}$. Then we have
$(v_1-1)(v_2-1)\equiv a\bmod 3^\alpha$. If $a\equiv-1\pmod 3$ let
$v_1\equiv -2\pmod{3^\alpha}$ and $v_2\equiv -a+1\pmod{3^\alpha}$.
Then we have $(v_1+1)(v_2-1)\equiv a\pmod{3^\alpha}$.

Consider $p>3$. Take $v_2$ so that $v_2\not\in\{0, 1, 1-a\}\mod p$.
Then there is some $v_1\not\equiv 0\pmod{p}$ such that
$(v_2-1)(v_1-1)\equiv a\pmod{p^\a}$.
\end{proof}

\begin{proof}[Proof of Theorem \ref{ap}]
Let $D$ be any positive integer satisfying
\begin{itemize}
\item[(a)] $d|D$;
\item[(b)] $D,D^2,\ldots,D^{49}$ are all in $\cV$.
\end{itemize}
For example, let $P$ be the largest prime factor of $d$,
$\gamma$ sufficiently large and
$$D = d\prod_{p\le P} p^\gamma.$$
Indeed, if
\[
D=\prod_{p\le P} p^{\alpha(p)}, \qquad \prod_{p\le P} (p-1) = \prod_{p\le P} p^{\beta(p)},
\]
then, assuming $\gamma \ge \max \beta(p)$, for all $j\ge 1$ we have
\[
\phi \bigg( \prod_{p\le P} p^{j\a(p)-\beta(p)+1} \bigg) = D^j.
\]

Now take any $D$ satisfying (a) and (b) above, and let
 $v$ be coprime to $D$.  Then
the set
$$f_j(x) = D^jx - v,\quad j=1,\dots,50,$$
of linear forms is 
 admissible. Indeed,  if $p\nmid v$, then 
$f_1(0)\cdots f_{50}(0) = v^{50}\not\equiv 0\pmod{p}$,
and if $p|v$ then $p\nmid D$ and
$f_1(1)\cdots f_k(1) \equiv D^{1225}\not\equiv 0\pmod{p}$.
Since $\DHL[50,2]$ holds, 
there are $j_1<j_2$ such that for infinitely many positive
integers $x$ both numbers $p_1=D^{j_1}x - v$, $p_2=D^{j_2}x - v$ are
primes. Denote $j=j_2-j_1$. There exists $l$ such that $\phi(l) = D^j$.
If $x$ is large enough, then $(p_1,l) = (p_2,l) = 1$. We have
\be\label{phi-p1lp2}
\phi(p_2) - \phi(p_1 l)  = (p_2-1) - (p_1-1)D^j  = (v+1)(D^j-1).
\ee

Let $v_1,v_2$ be as in Lemma \ref{ap-lem},
and let $v$ satisfy 
\[
\begin{cases}
v\equiv -v_1 \pmod{D}, v>0 & \text{ if } (v_1-1)(v_2-1)\equiv a\pmod{D},\\
v\equiv v_1 \pmod{D}, v<-1 & \text{ otherwise}.
\end{cases}
\]

Fix a prime $q\equiv v_2\bmod D$ with $(q,l)=1$,
and assume that $p_1,p_2 > q$.
 Then
 \be\label{lpq}
\phi(p_2 q) - \phi(p_1 l q)  =(q-1)(v+1)(D^j-1).
\ee
Thus, $|(q-1)(v+1)(D^j-1)| \in \cD$.
The right side of \eqref{lpq} is $\equiv (q-1)(-v-1)\equiv (v_2-1)(-v-1)\pmod{D}$
and has sign equal to the sign of $v$.
If $(v_1-1)(v_2-1)\equiv a\pmod{D}$, then $v>0$
and thus the right side of \eqref{lpq} is 
positive and congruent to $(v_2-1)(v_1-1)\equiv a\pmod{D}$.
Otherwise, $v<0$ and the right side of \eqref{lpq} is 
negative and congruent to $(v_2-1)(-v_1-1)\equiv -a\pmod{D}$.
By varying $q$, we find that
there are infinitely many elements of $\cD$ 
in the residue class $a\bmod d$.
\end{proof}

\textbf{Remarks.}  Equation \eqref{phi-p1lp2} 
holds for any $v$ coprime to $D$.
Thus,
if $a\equiv 2\pmod{4}$ and either $(a+1,D)=1$ or $(a-1,D)=1$ then
the residue class $a\mod d$ contains infinitely many elements
of $\cD$; take $v\equiv -a-1\pmod{d}, v>0$ if $(a+1,D)=1$
and $v\equiv a-1\pmod{d}, v<-1$ if  $(a-1,D)=1$.
Thus, if $d$ has at most two distinct prime factors,
and (b) holds for some $D$ composed only of the primes 
dividing $d$, then
every residue class $a\mod d$, with $2|a$ contains
infinitely many elements of $\cD$.
Note that in this case, for all $a$ with $2|a$,
either $(a+1,d)=1$ or $(a-1,d)=1$.
In particular this holds with
$d$ of the form $2^k$, $2^k 3^\ell$, or $2^k 5^\ell$
with $k\ge 2$, since in each case (a) and (b)
hold with $D=d$.  Item (b) also holds with $d=D=28$
(verified with PARI/GP).  We do not know how to derive the
same conclusion if $d$ has 3 or more prime factors, e.g. $d=60$.


\end{document}